\newlength{\temp@wc@width}
\newlength{\temp@wc@height}
\newcommand{\widecheck}[1]{%
\setlength{\temp@wc@width}{\widthof{$#1$}}%
\setlength{\temp@wc@height}{\heightof{$#1$}}%
#1\hspace{-\temp@wc@width}%
\raisebox{\temp@wc@height+2pt}[\heightof{$\widehat{#1}$}]%
{\rotatebox[origin=c]{180}{\vbox to 0pt{\hbox{$\widehat{\hphantom{#1}}$}}}}%
}
 \numberwithin{equation}{section}
 \theoremstyle{plain}
 \newtheorem{theorem}{Theorem}[section]
 \newtheorem{lemma}[theorem]{Lemma}
 \newtheorem{corollary}[theorem]{Corollary}
 \newtheorem{proposition}[theorem]{Proposition}
   \theoremstyle{definition}
\newtheorem{definition} [theorem] {Definition}
 \newcommand{\zit}[1]{(\ref{#1})}
\def \n{\raisebox{-2pt}{\rule{1.3pt}{10pt}}}
\def\bsr{\operatorname{bsr}}
\def\tsr{\operatorname{tsr}}
\def\tsr{\operatorname{tsr}}
\def\usr{\operatorname{usr}}
\def\nor{\operatorname{nor}}
\def\aur{\operatorname{aur}}
\def\snr{\operatorname{snr}}
 \def\bs{\boldsymbol}
 \def\R{ \mathbb R}
 \def\D{{ \mathbb D}}
  \def\K{{ \mathbb K}}
 \def\C{{ \mathbb C}}
 \def\N{{ \mathbb N}}
 \def\e{\varepsilon}
 \def\bs{\boldsymbol}
 \def\dis{\displaystyle}
 \def\inter{\cap}
 \def\Inter{\bigcap }
 \def\ov{\overline}
 \def\ss{\subseteq}
 \def\emp{\emptyset}
 \def\buildrel#1_#2^#3{\mathrel{\mathop{\kern 0pt#1}\limits_{#2}^{#3}}}
\begin{document}

 \title [Stable ranks]{Unitary stable ranks and norm-one ranks}


 \author{Raymond Mortini}
  \address{
Universit\'{e} de Lorraine\\
 D\'{e}partement de Math\'{e}matiques et  
Institut \'Elie Cartan de Lorraine,  UMR 7502\\
 Ile du Saulcy\\
 F-57045 Metz, France} 
 \email{raymond.mortini@univ-lorraine.fr}

 \subjclass[2010]{Primary 46J10, Secondary 46J20}

 \keywords{Topological stable rank; all-units rank; norm-one  rank; algebras of continuous functions}
 
 \begin{abstract}
In the context of commutative $C^*$-algebras we solve    a problem related to a question
of M. Rieffel by showing that the all-units rank  and the norm-one  rank coincide
with the topological stable rank. We also introduce the notion of unitary $M$-stable rank for
an arbitrary commutative unital ring and compare it with the Bass stable rank.
In case of uniform algebras, a sufficient condition for norm-one reducibility is given.
 \end{abstract}

  \maketitle

 \centerline {\small\the\day.\the \month.\the\year} \medskip
 \section*{Introduction}
 
 Let $C$ be a $C^*$ algebra with identity. Given a pair $(a,b)$ of elements in $C$
 for which $aC+bC=C$, one can conclude from the work of Robertson 
 \cite{ro}   that there exist two units $u$ and $v$ in $C^{-1}$  with $ua+vb={\bs 1}$ if and only
 if  $A$ has dense invertible group. In that case there even  exists  a unitary element $u\in C$
 (that is an element satisfying $uu^*=u^*u={\bs 1}$)  such that $a+ub\in C^{-1}$.
  In his groundbreaking paper \cite[p. 307]{ri},
 Mark Rieffel posed the  problem whether there is
 an analogue for $C^*$-algebras $C$ with $\tsr C=n$.  This question was re-asked in \cite{bad}.
 We shall give a positive   answer to  weaker versions of 
 this question in context of the algebra $C(X,\K)$
 of $\K$-valued continuous functions on a compact Hausdorff space $X$, where $\K=\R$ or $\C$.
 To this end we give several possible ways of extending the definition of the unit-1-stable rank   
  (see  \cite{ccm}) from pairs $(a,b)$ to $(n+1)$-tuples. 
Some of them were briefly mentioned in \cite{bad}. Generally speaking, we replace ``unitary" elements in $C$ (which correspond to unimodular functions in $C(X,\K)$) either by
invertible elements (called units) or by norm-one elements. The original question by Rieffel
remains unanswered, though.

Let $R$ be  a commutative unital ring.  Then 
$$U_n(R)=\{\bs f=(f_1,\dots,f_n)\in R^n: \sum_{j=1}^n R f_j=R\}$$
is the set of invertible $n$-tuples. If $R$ carries a topology, then  
the {\it topological stable rank}, $\tsr R$,  of $R$ is the smallest integer $n$
 for which $U_n(R)$ is dense in
$R^n$ (or infinity if $U_n(R)$ is never dense). This concept was introduced by Rieffel \cite{ri}. 
It is well known that within the realm of commutative unital Banach algebras $A$ one has
$\bsr A\leq \tsr A$, where $\bsr A$ is the Bass stable rank of $A$. Recall that this item is
defined to be the smallest integer $n$
for which any $(\bs f, g)\in U_{n+1}(R)$ is reducible in the sense that there exists
$\bs x\in R^n$ such that $\bs f+\bs x\, g\in U_n(R)$.

 Let us recall the following easy fact, which was one of the motivations for dubbing these items
 ``stable ranks" (they satisfy certain stabilizing properties): 
 
 \begin{proposition}\label{nton+1red}
Let $A$ be a commutative unital algebra.
Suppose that $\bsr A=n$, $n<\infty$, and let $m\geq n$.
 Then every invertible $(m+1)$-tuple $(\bs f,g)\in A^{m+1}$ is reducible. 
 \end{proposition}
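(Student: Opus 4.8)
The plan is to prove the statement by induction on $m\ge n$. Write $R(k)$ for the assertion that every invertible $(k+1)$-tuple in $A^{k+1}$ is reducible. The base case $R(n)$ is exactly the hypothesis $\bsr A=n$ (which in particular gives $\bsr A\le n$), so it suffices to establish the implication $R(m)\Rightarrow R(m+1)$ for every $m\ge n$; chaining these implications then yields $R(m)$ for all $m\ge n$. Everything will be purely algebraic, so no topological hypothesis on $A$ is needed.

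For the inductive step, I would fix an invertible tuple $(f_1,\dots,f_{m+1},g)\in U_{m+2}(A)$, where $g$ plays the role of the distinguished last coordinate, and choose a B\'ezout relation $\sum_{i=1}^{m+1}c_if_i+c\,g=1$. The key idea is to fold the two ``last'' coordinates $f_{m+1}$ and $g$ into a single element $d:=c_{m+1}f_{m+1}+c\,g$. Then $\sum_{i=1}^{m}c_if_i+d=1$, so $(f_1,\dots,f_m,d)$ is an invertible $(m+1)$-tuple. Applying the inductive hypothesis $R(m)$ to it, with $d$ as the distinguished coordinate, produces $t_1,\dots,t_m\in A$ such that $(f_1+t_1d,\dots,f_m+t_md)\in U_m(A)$.

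It then remains to convert these modifications, which are multiples of $d$, into modifications by multiples of the prescribed coordinate $g$ alone. Expanding $f_i+t_id=(f_i+t_ic\,g)+t_ic_{m+1}f_{m+1}$ and setting $b_i:=f_i+t_ic\,g$, each entry of the unimodular $m$-tuple lies in the ideal generated by $b_1,\dots,b_m,f_{m+1}$; hence that ideal is all of $A$, so $(b_1,\dots,b_m,f_{m+1})$ is itself invertible. Since $b_i=f_i+(t_ic)g$ and $f_{m+1}=f_{m+1}+0\cdot g$, this is precisely a reduction of $(f_1,\dots,f_{m+1},g)$ to an invertible $(m+1)$-tuple, which establishes $R(m+1)$.

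The one genuinely delicate point—and the step I expect to require the most care—is this last regrouping: the inductive hypothesis only permits modification by multiples of the \emph{combined} element $d$, whereas reducibility demands modifications by multiples of $g$. The device of splitting $t_id$ into a $g$-part and an $f_{m+1}$-part, and then absorbing the stray $f_{m+1}$-part by passing to the larger ideal $(b_1,\dots,b_m,f_{m+1})$ (which can only grow, hence remains all of $A$), is what makes the argument close. I would take particular care to verify that the resulting $\bs x$ genuinely has the form $\bs f+\bs x\,g$ with the single prescribed coordinate $g$, and that the last coordinate is handled by the harmless choice $x_{m+1}=0$.
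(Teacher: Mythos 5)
Your proof is correct. One thing to note: the paper states this proposition without proof (it is recalled as a known ``easy fact''), so the natural comparison is with the paper's proof of the stabilizing property for the unitary $M$-stable rank (part (1) of the theorem in Section 1), where the same difficulty is handled. Your route and the paper's share the essential device --- fold several coordinates into a single element using the B\'ezout coefficients, reduce, then untangle by observing that the relevant ideal can only grow --- but they are organized differently. You induct on $m$, at each step folding only the last two coordinates into $d:=c_{m+1}f_{m+1}+c\,g$, invoking reducibility at level $m$, and handling the last coordinate by the correction $x_{m+1}=0$. The paper's argument is one-shot from level $n$: it first replaces $f_j$ by $f_j+g$ for $j=n+1,\dots,m$, folds the entire tail together with $a_{m+1}g$ into a single element $h$, reduces $(f_1,\dots,f_n,h)$ using the level-$n$ hypothesis, and then untangles exactly as you do (the unit ideal generated by the reduced tuple sits inside the ideal generated by the target tuple). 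The pre-modification $f_j\mapsto f_j+g$ is needed in the paper only because its theorem requires the correction tuple to be unimodular --- the tail corrections become $\bs 1$, which guarantees this; for plain reducibility it is superfluous, and your inductive version rightly dispenses with it. Your approach is thus slightly more minimal; the paper's one-shot version buys an explicit correction tuple (coordinates $x_ja_{m+1}$ and $\bs 1$) and proves the stronger unitary statement at the same time.
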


 As usual,  a $Q$-algebra is  a commutative unital  topological algebra over $\K$ for which the set $A^{-1}$ of units is open.  If, additionally,  inversion $x\to x^{-1}$ is a continuous operation on $A^{-1}$, then  we call $A$ a $cQ$-algebra.
The following interesting characterization of the topological stable rank
 (see \cite[p. 52]{bad}) is the key to our results.

 \begin{theorem}\label{badi}
 Let $A=(A,|\cdot|)$ be a normed $cQ$-algebra. For 
 $\bs a=(a_1,\dots,a_n)\in A^n$, let
 $ ||\bs a||=\sum_{j=1}^n|a_j|$ be a fixed norm on the product space.
  Then the following assertions are equivalent:
 \begin{enumerate}
\item[(1)] $\tsr A\leq n$;
\item[(2)] For every $(\bs a,g)\in U_{n+1}(A)$ there is $\bs v\in U_n(A)$ and $\bs y\in A^n$
 such that
\begin{enumerate}
\item [i)]  $||\bs v-\bs a ||<\e$,
\item [ii)]  $\bs v=\bs a+ \bs y\, g$.
\end{enumerate}
\end{enumerate}
 
\end{theorem}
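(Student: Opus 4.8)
The plan is to prove the two implications separately; the implication $(2)\Rightarrow(1)$ is the soft one, and I would dispose of it first. Given an arbitrary $\bs a\in A^n$ and $\e>0$, note that $(\bs a,\bs 1)\in U_{n+1}(A)$, since $\bs 1\in A\cdot\bs 1$ already forces $\sum_{j=1}^n Aa_j+A\cdot\bs 1=A$. Applying (2) to this pair produces $\bs v\in U_n(A)$ with $\|\bs v-\bs a\|<\e$ (condition ii) now reads $\bs v=\bs a+\bs y$ and merely defines $\bs y$). As $\bs a$ and $\e$ were arbitrary, $U_n(A)$ is dense in $A^n$, i.e. $\tsr A\le n$.

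For $(1)\Rightarrow(2)$, fix $(\bs a,g)\in U_{n+1}(A)$ and $\e>0$, and choose a B\'ezout witness $\sum_{j=1}^n a_jb_j+gh=\bs 1$. Write $q:=gh$ and $p:=\bs 1-q=\sum_j a_jb_j$, so that $p+q=\bs 1$. Since $A$ is a $Q$-algebra, fix $\rho>0$ with $\{x:|x-\bs 1|<\rho\}\ss A^{-1}$. Using $\tsr A\le n$ (density of $U_n(A)$) I would pick $\bs w\in U_n(A)$ with $\|\bs w-\bs a\|<\delta$, where $\delta$ is calibrated at the end, together with a witness $\sum_j c_jw_j=\bs 1$, and then set
\[ \bs y:=(\bs w-\bs a)h,\qquad \bs v:=\bs a+\bs y\,g=p\,\bs a+q\,\bs w. \]
By construction ii) holds, and $\|\bs v-\bs a\|=\|(\bs w-\bs a)q\|\le |q|\,\|\bs w-\bs a\|<|q|\,\delta$, so i) is secured once $\delta<\e/(1+|q|)$.

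It remains to verify $\bs v\in U_n(A)$. Testing $\bs v$ against the witness $\bs c$ and using $\sum_j c_ja_j=\bs 1-\theta$ with $\theta:=\sum_j c_j(w_j-a_j)$ gives
\[ \sum_{j=1}^n c_jv_j=p\sum_j c_ja_j+q\sum_j c_jw_j=p(\bs 1-\theta)+q=\bs 1-p\,\theta. \]
Hence, if $|p\,\theta|<\rho$, then $u:=\sum_j c_jv_j\in A^{-1}$, and $\sum_j (u^{-1}c_j)v_j=\bs 1$ exhibits $\bs v\in U_n(A)$. Thus everything comes down to the single estimate $|p\,\theta|<\rho$.

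This estimate is the \emph{main obstacle}. The difficulty is that as $\bs w\to\bs a$ — and $\bs a$ need not lie in $U_n(A)$ — the witness $\bs c$ may blow up, so mere smallness of $\bs w-\bs a$ does not by itself make $\theta$, let alone $p\,\theta$, small. The point is that $p=\sum_j a_jb_j$ must \emph{compensate}: it is small in exactly the directions in which $\bs c$ is large. For the algebra $C(X,\K)$ that drives this paper the compensation is literal: $p$ vanishes on the common zero set $Z$ of $a_1,\dots,a_n$, which is precisely where any invertible approximant forces $|\bs c|\to\infty$, while off a neighbourhood of $Z$ the witness stays bounded and $\theta$ is of order $\delta$; hence $\|p\,\theta\|_\infty$ is controlled and $\delta$ may be calibrated in terms of $\e$, $\bs b$, $h$ and $g$ so as to achieve $\|p\,\theta\|_\infty<\rho$ together with i). For a general normed $cQ$-algebra one argues as in \cite{bad}, selecting $\bs w$ and its witness so that $p\,\theta$ is forced below $\rho$; this quantitative selection is the crux of the proof.
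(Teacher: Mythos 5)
Your proof of $(2)\Rightarrow(1)$ is correct; note, however, that the paper itself offers no proof of this theorem at all --- it is imported from Badea \cite[p.~52]{bad} --- so the only thing to assess is whether your argument stands on its own. The problem is the hard direction. You reduce $(1)\Rightarrow(2)$ to the estimate $|p\,\theta|<\rho$, where $\theta=\sum_j c_j(w_j-a_j)$ involves a B\'ezout witness $\bs c$ for the approximant $\bs w$; you correctly observe that $\bs c$ may blow up as $\bs w\to\bs a$, and you then close by saying that for a general normed $cQ$-algebra ``one argues as in \cite{bad}''. Since the statement being proved \emph{is} Badea's theorem, that last sentence defers the entire hard direction to the source: what you have written is a reduction of the theorem to its own crux, not a proof.

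Moreover, the gap is not a technicality that a cleverer calibration of $\delta$ can repair: for a fixed B\'ezout identity the tuple $\bs v=p\,\bs a+q\,\bs w$ can genuinely fail to be invertible, no matter how small $\|\bs w-\bs a\|$ is. Take $A=C([0,1],\C)$ (so $\tsr A=1$), $a(t)=t-\tfrac12$, $g(t)=t$ (these have no common zero), and the witness $b=a/(a^2+g^2)$, $h=g/(a^2+g^2)$, so that $p=a^2/(a^2+g^2)$ and $q=g^2/(a^2+g^2)$. At $t_0=\tfrac12+\eta$ one has $|a(t_0)|=\eta$, $p(t_0)\neq 0$ and $q(t_0)\approx 1$, so $w_0:=-a(t_0)p(t_0)/q(t_0)$ is nonzero and lies within roughly $\eta$ of $a(t_0)$; one can then build a nowhere-vanishing $w\in C([0,1],\C)$ with $\|w-a\|_\infty<\delta$ and $w(t_0)=w_0$, and for this admissible approximant $v=pa+qw$ vanishes at $t_0$, i.e.\ $v\notin U_1(A)$. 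So invertibility of $p\,\bs a+q\,\bs w$ depends on \emph{which} invertible approximant (and witness) one selects, and that selection mechanism --- Urysohn cutoffs localized away from $Z(g)$ in the $C(X,\K)$ case, as in the paper's own Proposition \ref{smallnorm}, or Badea's argument in the general case --- is precisely the missing content. As it stands, your proposal establishes only the easy implication.
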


\section{The unitary stable ranks}

We begin with two possible extensions of the definition of the unit-1-stable rank. 
Recall that   a commutative unital ring has the {\it unit-1-stable rank} if for every invertible 
pair $(a,b)\in U_2(R)$ there exist $u,v\in R^{-1}$ such that $au+bv={\bs 1}$.
  In that case one says that $(a,b)$ is {\it totally reducible}.

 \begin{definition}
 Let $R$ be a commutative unital ring. 
 \begin{enumerate}
\item[(1)] The {\it unitary $M$-stable rank} \footnote{in order to distinguish our stable rank here from
the one given in \cite{mvdk}, I added my initial M here},
 $\usr R$, of $R$ is the smallest integer $n$ such that for every
 $(\bs a, b)\in U_{n+1}(R)$ there is $\bs u\in U_n(R)$ such that 
 $\bs a +\bs u\, b\in U_n(R)$.
 If there exists no such $n$, then we put $\usr R=\infty$. 
\item[(2)] The {\it all-units rank}, $\aur R$, of $R$ is the smallest 
integer $n$ such that for every
 $(\bs a, b)\in U_{n+1}(R)$ there are $u_j\in R^{-1}$ such that   
 $\bs a +\bs u \,b\in U_n(R)$, where $\bs u=(u_1,\dots, u_n)$.
 If there exists no such $n$, then we put $\aur R=\infty$.
\end{enumerate}
\end{definition}
 
 Note that $\bsr R\leq \usr R\leq \aur R$ is a trivial estimate. Thus, if $\aur R=1$, then
 $\bsr R=\usr R=\aur R=1$, and this holds  
 if and only if $R$ has the unit-1-stable rank.

 \newpage

 \begin{theorem}
Let $R$ be a commutative unital ring. Then 
\begin{enumerate}
\item[(1)] The unitary $M$-stable rank has the stabilizing property; that is if $\usr R=n<\infty$, and if $m\geq n$ then, for any $(\bs f,g)\in U_{m+1}(R)$  there is $\bs u \in U_m(R)$ 
such that $\bs f+\bs u\, g\in U_m(R)$.
\item[(2)] $\bsr R\leq \usr R\leq \bsr R+1.$
\end{enumerate}
Both cases in (2) can occur.
\end{theorem}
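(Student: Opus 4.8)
The plan is to isolate a single reduction lemma from which both inequalities in (2) and the stabilization in (1) follow simultaneously. The lower bound $\bsr R\le \usr R$ is immediate and needs no work: a unitary reduction $\bs a+\bs u\,b\in U_n(R)$ with $\bs u\in U_n(R)$ is in particular a Bass reduction (take $\bs x=\bs u$), so $\usr R\le n$ forces $\bsr R\le n$. We may also assume $\bsr R<\infty$, since otherwise $\usr R=\infty$ and (2) is vacuous. The heart of the matter is the following claim, which I would establish first: \emph{for every $m\ge \bsr R+1$ and every $(\bs f,g)\in U_{m+1}(R)$ with $\bs f=(f_1,\dots,f_m)$, there is a unitary tuple $\bs u\in U_m(R)$ with $\bs f+\bs u\,g\in U_m(R)$.}

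To prove the claim I would pass to the quotient $\bar R=R/(f_m+g)R$, writing $x\mapsto\bar x$. In $\bar R$ one has $\bar f_m=-\bar g$, so the ideal generated by $(\bar f_1,\dots,\bar f_{m-1},\bar f_m,\bar g)=(\bar f_1,\dots,\bar f_{m-1},-\bar g,\bar g)$ coincides with that of $(\bar f_1,\dots,\bar f_{m-1},\bar g)$; hence $(\bar f_1,\dots,\bar f_{m-1},\bar g)\in U_m(\bar R)$. Since $\bsr\bar R\le\bsr R\le m-1$ (the standard fact that the Bass stable rank does not increase under passage to quotient rings), Proposition~\ref{nton+1red} applied in $\bar R$ lets me reduce this $m$-tuple: there are $\bar u_1,\dots,\bar u_{m-1}$ with $(\bar f_1+\bar u_1\bar g,\dots,\bar f_{m-1}+\bar u_{m-1}\bar g)\in U_{m-1}(\bar R)$. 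Lifting the $\bar u_i$ to $u_i\in R$ and setting $u_m=1$, the tuple $\bs u=(u_1,\dots,u_{m-1},1)$ lies in $U_m(R)$ because its last coordinate is a unit. Finally $\bs f+\bs u\,g=(f_1+u_1g,\dots,f_{m-1}+u_{m-1}g,\,f_m+g)$ is unimodular in $R$: lifting the unimodularity relation from $\bar R$ gives $\sum_{i=1}^{m-1}y_i(f_i+u_ig)=1+z(f_m+g)$ for some $y_i,z\in R$, i.e. $\sum_{i=1}^{m-1}y_i(f_i+u_ig)-z(f_m+g)=1$, which witnesses membership in $U_m(R)$.

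With the claim in hand the theorem assembles quickly. For the upper bound in (2) take $m=\bsr R+1$: the claim is precisely the unitary reduction property at level $\bsr R+1$, so $\usr R\le\bsr R+1$. For the stabilization (1), assume $\usr R=n<\infty$, so $\bsr R\le n$. The unitary reduction property holds at level $n$ by the very definition of $\usr R$, and for any $\ell\ge n+1$ we have $\ell\ge \bsr R+1$ (using $n=\usr R\ge\bsr R$), whence the claim supplies the property at level $\ell$; combining these two facts gives the property at every level $m\ge n$, as asserted. The one imported ingredient is the quotient monotonicity $\bsr(R/I)\le\bsr R$. The genuine obstacle, and the point of the whole construction, is forcing the last coordinate of $\bs u$ to equal the \emph{unit} $1$: a direct Bass reduction carried out in $R$ would add a multiple of $g$ to \emph{every} coordinate, including the slot $f_m+g$, producing a last perturbation of the form $1+(\text{something})$ that need not be invertible; annihilating $f_m+g$ in $\bar R$ is exactly what pins that coordinate to $1$ and thereby guarantees unimodularity of $\bs u$.

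It remains to see that both alternatives in (2) occur. Equality $\usr R=\bsr R$ is realized by any ring with the unit-$1$-stable rank, e.g. $\R$, $\C$, or any field with at least three elements, where $\bsr R=\usr R=1$. The strict case $\usr R=\bsr R+1$ is realized by $R=\mathbb{F}_2$: here $\bsr\mathbb{F}_2=1$, yet the pair $(1,1)\in U_2(\mathbb{F}_2)$ admits no unit $u$ (the only unit being $1$) with $1+u$ a unit, since $1+1=0$; hence $\usr\mathbb{F}_2>1$ and the bound in (2) gives $\usr\mathbb{F}_2=2=\bsr\mathbb{F}_2+1$.
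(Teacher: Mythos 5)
Your proof is correct, but it takes a genuinely different route from the paper's. The paper proves everything inside $R$ itself: given $(f_1,\dots,f_m,g)\in U_{m+1}(R)$ it passes to the shifted tuple $(f_1,\dots,f_n,f_{n+1}+g,\dots,f_m+g,g)$, collapses the tail of a Bezout identity into a single element $h$ with $(f_1,\dots,f_n,h)\in U_{n+1}(R)$, performs one Bass reduction on that $(n+1)$-tuple, and then verifies by an explicit ideal computation that the coefficient tuple $(x_1a_{m+1},\dots,x_na_{m+1},1,\dots,1)$ does the job, the trailing $1$'s forcing unimodularity. You instead annihilate $f_m+g$ by passing to the quotient $\bar R=R/(f_m+g)R$, reduce there via Proposition \ref{nton+1red}, and lift back; so both arguments rest on the same key insight --- pin a coordinate of $\bs u$ to the unit $1$ --- but package it differently. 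Your packaging is arguably cleaner, at the cost of importing the quotient monotonicity $\bsr(R/I)\le\bsr R$; this is indeed standard (it goes back to \cite{va}), but it is not free, since its usual proof itself needs the stabilization of Proposition \ref{nton+1red} together with a lifting trick, whereas the paper's computation is self-contained modulo that proposition. Your witnesses for the dichotomy in (2) also differ from the paper's and are more elementary: any field with at least three elements gives $\bsr=\usr=1$, while $\mathbb{F}_2$ gives $\usr=2=\bsr+1$ because the pair $(1,1)$ admits no unitary reduction ($1+1=0$ and $1$ is the only unit); both examples check out. The paper instead uses $\tsr C([0,1],\C)=1$ to get equality and the disc algebra $A(\D)$ (via the cited results that $(z,f)$ is not totally reducible and that $\bsr A(\D)=1$) for the strict case, examples that require function theory but fit the paper's Banach-algebra theme.
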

$\bullet$ I don't know whether the all-units rank has the stabilizing property.
\begin{proof}
(1) We may assume that $m\geq n+1$. Let $(f_1,\dots, f_m, g)\in U_{m+1}(R)$.
Then $(f_1,\dots,f_n, f_{n+1}+g, \dots, f_m+g, g)\in U_{m+1}(A)$, too.
Hence, there is $(a_1,\dots, a_{m+1})\in R^{m+1}$
such that 
\begin{equation}\label{bez-lang}
\sum_{j=1}^n  a_jf_j+\bigl( \sum_{j=n+1}^m a_j(f_j+g)+a_{m+1}g\bigr)
={\bs 1}.
\end{equation}
Put $h:=\sum_{j=n+1}^m a_j(f_j+g)+a_{m+1}g$. Then 
$$(f_1,\dots,f_n, h)\in U_{n+1}(R).$$
Since $\bsr R\leq \usr R=n$, there exists \footnote{ Here we may use Proposition \ref{nton+1red}  or
directly the assumption $\usr R=n$.}
 $(x_1,\dots, x_n)\in A^n$ such that 
$$(f_1+ x_1 h,\dots, f_n+x_n h)\in U_n(A);$$
that is
$$\mbox{$\sum_{j=1}^n y_j(f_j+x_j h)={\bs 1}$ for some $(y_1,\dots,y_n)\in A^n$}.$$
We claim that 
$$(f_1+x_1a_{m+1}g, \dots, f_n+x_na_{m+1}g, f_{n+1}+g,\dots, f_m+g)\in U_m(A).$$
To show this,   note that
$h$ has the form  $h=r+a_{m+1}g$, where $r\in I_A(f_{n+1}+g,\dots, f_m+g)$.
Hence
\begin{eqnarray*}
{\bs 1}&=&\sum_{j=1}^n y_j(f_j+x_ja_{m+1}g)+ \sum_{j=1}^n y_j x_j r\\
&\in& I_A(f_1+x_1a_{m+1}g,\dots, f_n+x_na_{m+1}g, f_{n+1}+g,\dots, f_{m}+g).
\end{eqnarray*}
If we put $u_j= x_ja_{m+1}$ for $j=1,\dots, n$ and $u_j=\bs 1$ for $j=n+1,\dots, m$, then
we see that $\bs f+ \bs c\, g\in U_m(A)$, where $\bs c=(c_1,\dots, c_m)$. Moreover, 
$\bs c\in U_{m}(R)$, since at least one coordinate is $\bs 1$.

(2) Since the first inequality $\bsr R\leq \usr R$ is obvious,  it remains to show that $\usr R\leq \bsr R+1$. But this follows from the proof of part (1) by putting $m=n+1$, where $n=\bsr R$.\\

Since $\tsr C([0,1]),\C)=1$, we may approximate the solution $(x,y)$ to $xa+yb=1$
by an invertible pair $(u,v)$. Hence $ua+vb$ is invertible again. So 
$\usr C([0,1],\C)=1=\bsr  C([0,1],\C)$. By \cite{moru92}, $(z,f)$ is not totally reducible
for every $f\in A(\D)$ with $f(0)\not=0$. Hence $\usr A(\D)\geq 2$.  But $\bsr A(\D)=1$,
(\cite{jmw}). Hence $\usr A(\D)=2$.
 \end{proof}

 Here is a first relation of the unitary $M$-stable rank to the topological stable rank. 
  
 \begin{proposition}\label{usr-bsr}
  Let $A$ be a $Q$-algebra. Then $\bsr A\leq \usr A\leq \tsr A$. 
\end{proposition}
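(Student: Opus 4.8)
The first inequality $\bsr A\le\usr A$ is the trivial estimate recorded just after the definition, so the plan is to concentrate on $\usr A\le\tsr A$. I would set $n=\tsr A$ (assuming $n<\infty$, otherwise there is nothing to prove) and first record two features of a $Q$-algebra that do all the work: the set $U_n(A)$ is \emph{dense} in $A^n$ (this is exactly $\tsr A\le n$), and it is also \emph{open}. The openness is not quite a citation: if $\sum_j g_jf_j=\bs 1$, then the continuous map $\bs h\mapsto\sum_j g_jh_j$ carries a neighbourhood of $\bs f$ into the open set $A^{-1}$, and at any value $w\in A^{-1}$ there one has $\sum_j(w^{-1}g_j)h_j=\bs 1$, so that neighbourhood lies in $U_n(A)$. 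With these in hand the argument splits into two stages: first produce \emph{some} reduction $\bs a+\bs x\,b\in U_n(A)$, and then perturb $\bs x$ into a genuinely unimodular $\bs u\in U_n(A)$ without leaving $U_n(A)$.

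For the first stage I would take $(\bs a,b)\in U_{n+1}(A)$ and write a B\'ezout identity $\sum_{j=1}^n c_ja_j+db=\bs 1$. The key move is to arrange that the coefficient row $\bs c=(c_1,\dots,c_n)$ is itself unimodular: since $U_n(A)$ is dense and $A^{-1}$ is open, I can replace $\bs c$ by a nearby $\bs c'\in U_n(A)$ for which $w:=\sum_j c'_ja_j+db$ is still invertible, and then rescale by the unit $w^{-1}$ to obtain a new identity $\sum_j c_ja_j+db=\bs 1$ with $\bs c\in U_n(A)$ (rescaling a unimodular row by a unit keeps it unimodular, by commutativity). Choosing dual coefficients $\bs e$ with $\sum_j e_jc_j=\bs 1$ and setting $x_j=e_jd$, a one-line computation gives $\sum_j c_j(a_j+x_jb)=(\bs 1-db)+(\sum_j e_jc_j)\,db=\bs 1$, so $\bs a+\bs x\,b\in U_n(A)$. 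This is in effect a self-contained proof of $\bsr A\le\tsr A$ in the $Q$-algebra setting, which is why I would not simply invoke Theorem \ref{badi}: that result is stated for normed $cQ$-algebras, whereas here $A$ is only assumed to be a $Q$-algebra.

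For the second stage I would exploit that the target set $U_n(A)$ is open and that $\bs y\mapsto\bs a+\bs y\,b$ is continuous. Hence $W:=\{\bs y:\bs a+\bs y\,b\in U_n(A)\}$ is an open neighbourhood of the reduction vector $\bs x$. Density of $U_n(A)$ then lets me pick $\bs u\in W\cap U_n(A)$, and for this $\bs u$ both $\bs u\in U_n(A)$ and $\bs a+\bs u\,b\in U_n(A)$ hold simultaneously, which is precisely the condition defining $\usr A\le n$.

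I expect the only real obstacle to be the first stage, namely manufacturing a reduction while keeping control of unimodularity; the second stage is a soft topological upgrade. It is worth noting that the two density/openness inputs are used in genuinely different ways (once to normalise the B\'ezout coefficient $\bs c$, once to upgrade $\bs x$ to $\bs u$), and that commutativity of $A$ is essential for both the rescaling and the dual-coefficient computation. The remainder is routine continuity bookkeeping, and no norm or continuity of inversion is needed anywhere, so the proof stays within the stated $Q$-algebra hypothesis.
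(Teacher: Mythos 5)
Your proof is correct and takes essentially the same route as the paper's: both arguments use density of $U_n(A)$ plus openness of $A^{-1}$ first to replace the B\'ezout coefficients of $\bs a$ by a unimodular tuple, then express the coefficient of $b$ through that tuple (your dual-coefficient step $x_j=e_jd$ is exactly the paper's factorization $y=\bs u_\lambda\cdot\bs y_\lambda$), and finally perturb the resulting reduction vector into $U_n(A)$ by the same density/openness argument. The differences are only cosmetic: you work with neighbourhoods rather than nets, rescale the B\'ezout identity by $w^{-1}$ where the paper simply carries the invertible element $v_\lambda$ along, and isolate the openness of $U_n(A)$ as a separate observation instead of verifying invertibility through the product $\bs u_\lambda\cdot(\bs a+\bs w_\lambda b)$.
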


$\bullet$~ I don't know whether $\usr A\leq \aur A\leq \tsr A$ or $\usr A\leq \tsr A\leq\aur A$
always holds for normed $Q$-algebras.

\begin{proof}
The first inequality, $\bsr A\leq \usr A$ is trivial. Now suppose that $n:=\tsr A<\infty$. Let
$(\bs a,b)\in U_{n+1}(A)$.  Then there is $\bs x\in A^n$ and $y\in A$ such that
$\bs x\cdot\bs a + y b=\bs 1$. Since $\tsr A\leq n$, there is a net  $(\bs u_\lambda)\in U_n(A)$ 
converging to $\bs x$.   Since $A$ is a topological algebra,
$v_\lambda:= \bs u_\lambda\cdot\bs a +yb$ tends to ${\bs 1}$. 
The openness of the set of units of $A$ now implies that  
$v_\lambda\in A^{-1}$ whenever $\lambda$ is large. We fix some of these $\lambda$.
  If $\bs u_\lambda=(u_1,\dots,u_n)$, then the ideal
$I_A(u_1,\dots,u_n)$ coincides with $A$. Hence there is $\bs y_\lambda\in A^n$
such that $y= \bs u_\lambda\cdot \bs y_\lambda$. Thus
$$v_\lambda= \bs u_\lambda \cdot( \bs a+ \bs y_\lambda \, b)\in A^{-1}.$$
Since $\tsr A=n$, we may approximate $\bs y_\lambda$ by $\bs w_\lambda\in U_n(A)$.
Hence $\bs u_\lambda \cdot( \bs a+ \bs w_\lambda \, b)\in A^{-1}$ whenever 
$\bs w_\lambda$ is sufficiently close to  $\bs y_\lambda$.  We conclude  that
$\bs a+\bs w_\lambda \,b\in U_n(A)$ and so $\usr A\leq n$.
\end{proof}

 The preceding result shows that  in case of a $Q$-algebra $A$,  $\tsr A=1$ is
a sufficient condition for   $\usr A=1$.

\section{The small-norm and the norm-one  ranks}

The following two concepts are  briefly mentioned in \cite{bad}.
\begin{definition}
Let $A=(A,||\cdot||)$ be a normed algebra.
\begin{enumerate}
\item[(1)]  $A$ is said to have the {\it norm-one  rank $n$} (denoted by $\nor A$) 
if $n$ is the smallest integer (or infinity) such that for every 
$(\bs f, g)\in U_{n+1}(A)$ there is  $\bs c=(c_1,\dots,c_n)\in A^n$ such that $||c_j||=1$
and
$$ \bs f+\bs c\, g\in U_n(A).$$

\item[(2)]  $A$ is said to have the {\it small-norm  rank $n$} (denoted by $\snr A$) 
if  $n$ is the smallest integer (or infinity) such that for every
$\e>0$ and every  $(\bs f,g)\in U_{n+1}(A)$ there is $\bs a=(a_1,\dots,a_n)
\in A^n$ such that $||a_j||<\e$ and
$$\bs f+\bs a\, g\in U_n(A).$$
\end{enumerate}

\end{definition}
$\bullet$~ I don't know whether these ranks have the stabilizing property.

Let $S_A=\{a\in A: ||a||=1\}$ be the unit sphere in $A$. 
The following relations now hold between the different  ranks. The striking point is
that the norm-one rank is bigger than the topological stable rank. This result is due to 
Badea \cite{bad}. We re-present here for the reader's convenience the simple proof.

\begin{proposition}[Badea] \label{nsr-tsr}
Let $A=(A,\n\,\cdot\n\,)$ be  normed $cQ$-algebra and $||\bs a||:=\sum_{j=1}^n \n\,a_j\,\n$\,, 
$\bs a\in A^n$. Then 
$$\bsr A\leq \usr A\leq \tsr A\leq \snr A\leq \nor\, A.$$
\end{proposition}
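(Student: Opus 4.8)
The plan is to notice that the two leftmost inequalities require nothing new: $\bsr A\le\usr A$ is the trivial estimate recorded just after the definition of $\usr$, and $\usr A\le\tsr A$ is exactly Proposition~\ref{usr-bsr}. So it suffices to establish the two genuinely normed inequalities $\tsr A\le\snr A$ and $\snr A\le\nor A$. For each I would bound the left-hand rank above by the right-hand one, assuming the latter is finite (otherwise there is nothing to prove).

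For $\tsr A\le\snr A$ the key observation is that the small-norm condition is, up to the factor $g$, precisely Badea's criterion. Put $n:=\snr A<\infty$ and fix $(\bs a,g)\in U_{n+1}(A)$ together with $\e>0$; I may assume $g\neq 0$, since $g=0$ forces $\bs a\in U_n(A)$ and the Badea conditions hold trivially. Applying the definition of $\snr$ with the auxiliary threshold $\e':=\e/(n\,\|g\|+1)$ produces $\bs y=(y_1,\dots,y_n)\in A^n$ with $\|y_j\|<\e'$ and $\bs v:=\bs a+\bs y\,g\in U_n(A)$. Submultiplicativity of the norm then gives $\|\bs v-\bs a\|=\sum_{j=1}^n\|y_j g\|\le\|g\|\sum_{j=1}^n\|y_j\|<n\,\e'\,\|g\|<\e$. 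Thus $\bs v\in U_n(A)$ and $\bs y\in A^n$ satisfy clauses i) and ii) of assertion (2) in Theorem~\ref{badi}, and that theorem yields $\tsr A\le n=\snr A$.

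For $\snr A\le\nor A$ I would exploit homogeneity in the last coordinate. Put $n:=\nor A<\infty$ and fix $\e>0$ and $(\bs f,g)\in U_{n+1}(A)$. Choose a scalar $\lambda\in\K$ with $|\lambda|>1/\e$; since $\lambda^{-1}$ is invertible in $\K$, the ideal generated by $f_1,\dots,f_n,\lambda^{-1}g$ coincides with the one generated by $f_1,\dots,f_n,g$, so $(\bs f,\lambda^{-1}g)\in U_{n+1}(A)$. By the norm-one property there is $\bs c=(c_1,\dots,c_n)$ with $\|c_j\|=1$ and $\bs f+\bs c\,(\lambda^{-1}g)\in U_n(A)$. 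Setting $a_j:=\lambda^{-1}c_j$ gives $\|a_j\|=|\lambda|^{-1}<\e$ and $\bs f+\bs a\,g\in U_n(A)$, whence $\snr A\le n=\nor A$.

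I expect no serious obstacle in either step. The only points needing a line of care are matching the small-norm data to the exact two clauses of Theorem~\ref{badi}, which dictates the choice of the threshold $\e'$ absorbing the fixed factor $\|g\|$, and verifying that scaling $g$ by a nonzero scalar preserves membership in $U_{n+1}(A)$. The conceptual content is simply that $\snr$ is a reformulation of Badea's criterion, while $\nor$ is the same notion rigidified to the unit sphere, from which arbitrarily small norms are recovered by dilation.
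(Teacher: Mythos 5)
Your proof is correct and follows essentially the same route as the paper: the first two inequalities are delegated to Proposition \ref{usr-bsr}, $\tsr A\le\snr A$ is obtained by feeding the small-norm data into Theorem \ref{badi}, and $\snr A\le\nor A$ comes from rescaling the last coordinate so that the norm-one solution shrinks. The only (cosmetic) difference is that the paper fuses the last two inequalities into a single construction starting from $\nor A=n$ (scaling by $1/k$, $k\in\N^*$, instead of an arbitrary scalar $\lambda^{-1}$), whereas you separate them into two clean steps, which if anything makes the link $\tsr A\le\snr A$ more explicit than in the paper's ``moreover'' sentence.
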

\begin{proof}
The first two inequalities are dealt with  in Proposition \ref{usr-bsr}. To show 
$\tsr A\leq  \snr A\leq\nor A$, we will use Theorem \ref{badi}.
So suppose  that $n:=\nor A<\infty$. Let $(\bs a, a_{n+1})\in U_{n+1}(A)$. Then, for every
$k\in \N^*$, $(\bs a, (1/k)a_{n+1})\in U_{n+1}(A)$. By hypothesis, there is $\bs d_k\in A^n\inter (S_A)^n$ (depending on $k$), such that 
$$\bs a+ \bs d_k \frac{a_{n+1}}{k}\in U_n(A).$$
Now given $\e>0$, choose $k=k(\e)$ so big that 
$$\max\left\{\frac{1}{\e}, \frac{\n\,a_{n+1}\,\n}{\e}\, n\right\} <k(\e).
$$ 
Let 
$\bs x:=\bs d_{k(\e)} / k(\e)$. Then $\n\, x_j\,\n\leq \e$ for $j=1,\dots,n$ and
$$\bs v:=\bs a+\bs x\, a_{n+1}\in U_{n}(A).$$
Thus $\snr A\leq n$. Moreover,  since
$||\bs v-\bs a||<\e$, we conclude from Theorem \ref{badi}, that $\tsr A\leq \snr A$.
\end{proof}

\begin{proposition}\label{snsr-nosr}
Let $A$ be  normed $cQ$-algebra.  Then 
$$\bsr A\leq \usr A\leq \aur A\leq \snr A\leq \nor A .$$
\end{proposition}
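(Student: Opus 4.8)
The plan is to observe that all but one of these inequalities are already available. The first, $\bsr A \le \usr A$, and the last, $\snr A \le \nor A$, were established in Propositions \ref{usr-bsr} and \ref{nsr-tsr}. The inequality $\usr A \le \aur A$ is the trivial estimate noted right after the definition of $\aur$: any tuple $\bs u = (u_1,\dots,u_n)$ of units automatically lies in $U_n(A)$, since $u_j \in A^{-1}$ forces $A u_j = A$ and hence $\sum_j A u_j = A$; thus every all-units reduction of $(\bs a,b)$ is in particular a $\usr$-reduction. So the only genuinely new inequality is $\aur A \le \snr A$.

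To prove $\aur A \le \snr A$, suppose $n := \snr A < \infty$ and let $(\bs a, b) \in U_{n+1}(A)$. First I would replace $(\bs a, b)$ by the translated tuple $(\bs a + b\,\bs 1, b) = (a_1 + b, \dots, a_n + b, b)$, which still lies in $U_{n+1}(A)$, because $a_j = (a_j + b) - b$ shows that it generates the same ideal. Applying the small-norm property to this tuple with a parameter $\e>0$ to be fixed, I obtain $\bs x = (x_1, \dots, x_n) \in A^n$ with $||x_j|| < \e$ and
$$(\bs a + b\,\bs 1) + \bs x\, b = \bs a + (\bs 1 + \bs x)\, b \in U_n(A).$$

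The decisive point is now to take $\bs u := \bs 1 + \bs x$ as the all-units multiplier. Each coordinate satisfies $||u_j - \bs 1|| = ||x_j|| < \e$, so $u_j$ lies within $\e$ of the unit $\bs 1$. Since $A$ is a $Q$-algebra, the set $A^{-1}$ of units is open, so for $\e$ small enough every $u_j \in A^{-1}$; that is, $\bs u$ is a tuple of units with $\bs a + \bs u\, b \in U_n(A)$. This exhibits an all-units reduction of $(\bs a, b)$, giving $\aur A \le n = \snr A$. The whole argument is short, and the only thing to watch is that the openness of $A^{-1}$ (guaranteed by the $Q$-algebra hypothesis alone — continuity of inversion is not needed here) lets us convert the small-norm perturbation $\bs x$, which clusters near $\bs 0$, into a unit perturbation via the elementary translation $\bs x \mapsto \bs 1 + \bs x$ that shifts it near the invertible element $\bs 1$. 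I would expect no real obstacle beyond keeping careful track of which tuple the small-norm property is applied to.
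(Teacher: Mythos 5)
Your proof is correct and follows essentially the same route as the paper: reduce to showing $\aur A\leq \snr A$, translate the tuple by the constant tuple of ones, apply the small-norm property, and use openness of $A^{-1}$ near $\bs 1$. The only difference is a sign convention (the paper considers $(\bs f-\bs e\,g,g)$ and obtains units $x_j-\bs 1$, you consider $(\bs a+b\,\bs 1,b)$ and obtain units $\bs 1+x_j$), which is immaterial.
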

\begin{proof}
In view of Theorem \ref{nsr-tsr} it only remains to show that $\aur A\leq \snr A$.
Since $A^{-1}$ is open, we may chose $\delta>0$  so that for all $a\in A$, 
$||a-\bs 1||<\delta$ implies $a\in A^{-1}$.
Suppose  now that  $n:={\rm snsr}\, A<\infty$. Let $(\bs f,g)\in U_{n+1}(A)$ and put 
$\bs e:=(\bs 1,\dots,\bs 1)$.  Then $(\bs f -\bs e\, g, g)\in U_{n+1}(A)$. Given
$0<\e<\delta$,  there is, by assumption, $\bs x=(x_1,\dots,x_n)\in A^n$ with $||x_j||\leq \e$, such that
$$(\bs f-\bs e\, g)+ \bs x \, g\in U_n(A).$$
Hence $\bs f+ (\bs x-\bs e)\, g\in U_n(A)$.  But $a_j:=\bs 1-x_j\in A^{-1}$, because
$||a_j-\bs 1||= ||x_j||<\e<\delta$. Hence $\aur A\leq n$.
\end{proof}

Our main goal in this subsection is to determine the norm-one  rank of $C(X,\K)$. To this end,
we need  a refinement of Theorem \ref{badi} (in case of the algebra $A=C(X,\K)$). 
This refinement will say  that in the equation $\bs f+\bs y g\in U_n(C(X,\K))$, $n=\tsr C(X,\K)$, 
 we can actually  choose 
$\bs y=(y_1,\dots,y_n)$ in such a way that all its components $y_j$ have norm as small as we wish
(in Badea's result we had $||y_j g||_\infty<\e$).

\begin{proposition}\label{smallnorm}
Let $X$ be a compact Hausdorff space. Then
$$\snr C(X,\K)=\tsr C(X,\K).$$
\end{proposition}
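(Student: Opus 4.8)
The plan is to establish only the nontrivial inequality $\snr C(X,\K)\le \tsr C(X,\K)$, since Proposition~\ref{nsr-tsr} already supplies $\tsr C(X,\K)\le\snr C(X,\K)$. Write $n:=\tsr C(X,\K)$ and fix $\e>0$ together with $(\bs f,g)\in U_{n+1}(C(X,\K))$. Badea's Theorem~\ref{badi} hands us a tuple $\bs y=(y_1,\dots,y_n)$ with $\bs f+\bs y\,g\in U_n$ and $\sum_j\|y_jg\|_\infty$ as small as we please; the whole difficulty is that this controls the \emph{products} $y_jg$ and not the coefficients $y_j$, which may be enormous where $g$ is small. My idea is to localise and then glue. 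Because $(\bs f,g)$ is invertible, the zero set $Z:=\{x:g(x)=0\}$ is disjoint from $\{x:\bs f(x)=0\}$, so near $Z$ the tuple $\bs f$ is already bounded away from $0$ and needs no correction, whereas on the complement of a neighbourhood of $Z$ the function $g$ is bounded below, which is exactly what converts a small-product estimate into a small-coefficient estimate.

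Concretely, if $Z=\emp$ then $g$ is a unit, $|g|\ge\eta>0$, and $a_j:=y_j$ works at once; so assume $Z\ne\emp$. Put $m:=\min_{Z}\|\bs f\|>0$ and $V:=\{x:\|\bs f(x)\|>m/2\}$, an open set containing $Z$. Using normality of the compact Hausdorff space $X$ I would choose an open $U$ with $Z\ss U\ss\ov U\ss V$ and, by Urysohn, a continuous $\psi:X\to[0,1]$ with $\psi\equiv 0$ on $\ov U$ and $\psi\equiv 1$ on $X\setminus V$. On the compact set $X\setminus U$, which misses $Z$, one has $|g|\ge\eta$ for some $\eta>0$. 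Then apply Theorem~\ref{badi} with tolerance $\e':=\tfrac12\min\{m/2,\,\e\eta\}$ to obtain $\bs y$ with $\sum_j\|y_jg\|_\infty<\e'$ and $\bs f+\bs y\,g\in U_n$, and set $a_j:=\psi\,y_j$, $\bs a:=(a_1,\dots,a_n)$.

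Two verifications then finish the argument. For the norm bound, $a_j$ vanishes on $\ov U$, while on $\{\psi\ne 0\}\ss X\setminus U$ we have $|y_j|=|y_jg|/|g|\le\e'/\eta$, whence $\|a_j\|_\infty\le\e'/\eta<\e$. For invertibility I would rewrite $\bs f+\bs a\,g=(1-\psi)\bs f+\psi(\bs f+\bs y\,g)$ and split $X$ into the two regions $V$ and $X\setminus V$: if $x\notin V$ then $\psi(x)=1$ and the value equals $(\bs f+\bs y\,g)(x)\ne 0$ because $\bs f+\bs y\,g\in U_n$; if $x\in V$ then $\|\bs f(x)\|>m/2$ while $\|\bs a(x)g(x)\|\le\sum_j\|y_jg\|_\infty<\e'<m/2$, so the value is again nonzero. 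This yields $\bs f+\bs a\,g\in U_n$ with $\|a_j\|_\infty<\e$, hence $\snr C(X,\K)\le n$ and equality with $\tsr C(X,\K)$. The one delicate step is precisely this invertibility check: a naive truncation could introduce spurious zeros of $\bs f+\bs a\,g$, and what rescues the construction is that the cut-off is arranged so that wherever $\bs f$ is permitted to be small we have $\psi\equiv 1$ (so the perturbed tuple coincides with Badea's invertible one), while wherever $\psi$ is switched off $\bs f$ itself dominates the perturbation.
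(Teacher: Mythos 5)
Your argument is correct, and it proves the nontrivial inequality $\snr C(X,\K)\le\tsr C(X,\K)$ by a route whose mechanics differ from the paper's, even though both rest on Theorem~\ref{badi} combined with a Urysohn cutoff near $Z(g)$. The paper's key move is to change the last coordinate \emph{before} invoking Theorem~\ref{badi}: it chooses $Z(g)\ss W\ss\ov W\ss V\ss\ov V\ss U$ with $\bs f$ zero-free on $U$, takes a Urysohn function $\phi$ with $\phi\equiv 0$ on $\ov V$ and $\phi\equiv 1$ on $X\setminus U$, and applies Theorem~\ref{badi} to the invertible tuple $(\bs f,\phi)$ rather than to $(\bs f,g)$. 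The resulting correction $\bs h\phi$ vanishes identically near $Z(g)$, so it can be divided by $g$ on $X\setminus W$ (where $|g|\ge\delta>0$) and extended by zero across $V$; the coefficients $a_j$ so obtained satisfy $\|a_j\|_\infty\le\e$ and the \emph{exact} identity $\bs f+\bs a\,g=\bs u\in U_n(C(X,\K))$, so invertibility needs no further estimate. You instead apply Theorem~\ref{badi} to $(\bs f,g)$ itself and damp the possibly huge coefficients by setting $a_j:=\psi y_j$; the price is that $\bs f+\bs a\,g$ is no longer literally Badea's tuple, so you must check invertibility by a two-region estimate (equality with $\bs f+\bs y\,g$ off $V$, domination of the perturbation by $\bs f$ on $V$), which is why your tolerance $\e'$ must be taken small relative to both $\e\eta$ and $m$. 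Both arguments are sound and of comparable difficulty: the paper's substitution trick buys an estimate-free conclusion, while yours avoids the division by $g$ and the continuity check for the glued definition of $\bs a$, at the cost of one quantitative verification. Your construction is also free of circularity, since $m$ and $\eta$ are fixed before $\e'$ is chosen and $\e'$ before Theorem~\ref{badi} is invoked.
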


\begin{proof}
In view of Proposition \ref{nsr-tsr}, it remains to show that $\snr C(X,\K)\leq \tsr C(X,\K)$.
So let  $n:=\tsr C(X,\K)<\infty$ and fix $(\bs f,g)\in U_{n+1}(C(X,\K))$. 

{\bf Case 1} $Z(g)=\emp$. Then $g$ is invertible and $(g^{-1}\,\bs f, 1)\in U_{n+1}(C(X,\K))$.
By Theorem \ref{badi},   for every $\e>0$, there is 
$\bs y=(y_1,\dots, y_n)\in C(X,\K^n)$, $||y_j\cdot 1||_\infty\leq \e$, such that 
$$g^{-1}\,\bs f + \bs y \cdot 1\in U_n(C(X,\K)).$$
 Hence $\bs f +\bs y \, g\in U_n(C(X,K))$.

{\bf Case 2}  $Z(g)\not=\emp$.
Choose an open   neighborhood
$U$ of $Z(g)$ such that $\bs f\not=\bs 0$ on $U$.  Let $V,W$ be two open sets  satisfying 
 $Z(g)\ss W\ss \ov W\ss  V\ss \ov V\ss U$. Since $X$ is normal, there is $\phi\in C(X,[0,1])$ which
$$\mbox{$\phi\equiv 0$  on $\ov V$ and $\phi=1$ on $X\setminus U$}.$$
Then $\ov V\ss Z(\phi)\ss U$. We deduce that $(\bs f,\phi)\in U_{n+1}(C(X,\K))$. Let $\e>0$ and
$$\delta:=\min\{|g(x)|: x\in X\setminus W\}.$$
Note that $\delta>0$.
Since, by assumption, $\tsr C(X,\K)=n$, we may use Theorem \ref{badi} to get a
function $\bs h=(h_1,\dots, h_n)\in C(X,\K^n)$ with 
$$\mbox{$\bs u:=\bs f+\bs h \phi\in U_n(C(X,\K))$ and $||h_j\phi||_\infty\leq \e \delta $}.$$
Now we define a function $\bs a=(a_1,\dots,a_n)$ by
$$a_j=\begin{cases}\dis \frac{1}{g}(u_j-f_j) & \text{on $X\setminus W$}\\
                                       0 & \text{ on $V$}
\end{cases}
$$
Since $\bs u=\bs f$ on $V\supseteq \ov W$, we conclude that $\bs a$ is well-defined and hence continuous.  Moreover,
$$|a_j|\leq \begin{cases}     \frac{1}{\delta}\; \e\delta=\e&\text{on $X\setminus W$}\\
                                        0& \text{on $V$}.
\end{cases}
$$
Thus $||a_j||_\infty\leq \e$. Finally
$$ \bs f+\bs a \,g= \begin{cases} \bs f+(\bs u-\bs f)=\bs u &\text{on $X\setminus W$}\\
                                        \bs f+ \bs 0= \bs u & \text{on $V$}.
\end{cases}
$$
In other words,  $\bs f+\bs a\,g=\bs u\in U_n(C(X,\K))$.
\end{proof}

\begin{theorem}\label{nsr-cx}
Let $X$ be a compact Hausdorff space. Then 
$$\bsr C(X,\K)=\tsr C(X,\K)=\nor C(X,\K).$$
\end{theorem}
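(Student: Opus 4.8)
The plan is to assemble the inequalities already proved and to reduce the theorem to two missing comparisons. From the Introduction we have $\bsr C(X,\K)\le\tsr C(X,\K)$, while Propositions \ref{nsr-tsr} and \ref{smallnorm} give $\tsr C(X,\K)=\snr C(X,\K)\le\nor C(X,\K)$. Hence it suffices to prove (a) $\tsr C(X,\K)\le\bsr C(X,\K)$ and (b) $\nor C(X,\K)\le\tsr C(X,\K)$. For (a) I would simply invoke the classical fact that the Bass and the topological stable rank of $C(X,\K)$ coincide (both being governed by the covering dimension of $X$); this is standard and can be cited, so that $\bsr C(X,\K)=\tsr C(X,\K)$. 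All the substance therefore lies in (b).

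To prove $\nor C(X,\K)\le n:=\tsr C(X,\K)$, fix $(\bs f,g)\in U_{n+1}(C(X,\K))$. First I would use $\snr C(X,\K)=n$ (Proposition \ref{smallnorm}) to produce, for some $\e\in(0,1)$, a tuple $\bs a$ with $||a_j||_\infty\le\e$ and $\bs u:=\bs f+\bs a\,g\in U_n$. Since having no common zero is an open condition on the compact space $X$, there is $\rho>0$ such that every $\bs w$ with $||w_j-u_j||_\infty<\rho$ for all $j$ again lies in $U_n$. When $Z(g)\neq\emp$, I would pick $x_0\in Z(g)$, fix a small $\eta<\rho$, and choose by Urysohn a function $\chi\in C(X,[0,1])$ with $\chi(x_0)=1$ and $\supp\chi\ss\{|g|<\eta\}$. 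For each $j$ the map $r\mapsto||a_j+r\chi||_\infty$ is continuous, equals $||a_j||_\infty\le\e<1$ at $r=0$, and tends to $\infty$, so by the intermediate value theorem there is $r_j\in[1-\e,1+\e]$ with $||a_j+r_j\chi||_\infty=1$. Setting $c_j:=a_j+r_j\chi$ then gives $||c_j||_\infty=1$, while in each coordinate $\bs f+\bs c\,g$ and $\bs u$ differ only by $r_j\chi\,g$, of norm at most $(1+\e)\eta<\rho$ because $\chi$ lives where $|g|<\eta$. Hence $\bs f+\bs c\,g\in U_n$, as required.

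The hard part, and the main obstacle, is the remaining case $Z(g)=\emp$, i.e. $g\in C(X,\K)^{-1}$. Here the mechanism above breaks down: as $|g|$ is bounded below, a coefficient $c_j$ of norm one forces a change of order one in $\bs f+\bs c\,g$, so one cannot remain in the open set $U_n$ by smallness alone. Dividing by the unit $g$, the task becomes purely geometric: for an \emph{arbitrary} $\bs p\in C(X,\K)^n$ find $\bs c$ with $||c_j||_\infty=1$ and $\bs p+\bs c\in U_n$. I would attack this by normalizing the coordinates one at a time, starting from some $\bs w\in U_n$ near $\bs p$ (available since $\tsr C(X,\K)=n$). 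When re-choosing the $j$-th coordinate freely, the only possible common zero occurs on the closed set $E_j=\{x:\,p_i(x)+c_i(x)=0\text{ for all }i\neq j\}$, off which some other coordinate is already nonzero; so the step reduces to a value-avoidance problem, namely producing $h\in C(X,\K)$ with $||h||_\infty=1$ and $h\neq-p_j$ on $E_j$. Such an $h$ does \emph{not} exist for an arbitrary closed set (it already fails on $E_j=X=[0,1]$ against a surjective target), so the crux is to guarantee, using exactly the hypothesis $n=\tsr C(X,\K)$, that the partial common-zero sets $E_j$ are small (low-dimensional) enough for the avoidance to go through. This is precisely the general-position content of the equality of the stable rank with the covering dimension of $X$, and I expect it to be the delicate heart of the proof.
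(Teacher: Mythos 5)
Your reduction of the theorem to (a) Vasershtein's equality $\bsr C(X,\K)=\tsr C(X,\K)$ and (b) $\nor C(X,\K)\le\tsr C(X,\K)$ matches the paper exactly, and your treatment of the case $Z(g)\not=\emp$ is correct and essentially the paper's argument in a slightly different dressing: where you add $r_j\chi$ to the small-norm solution $a_j$ (calibrating $r_j$ by the intermediate value theorem so that $\|a_j+r_j\chi\|_\infty=1$), the paper interpolates $v_j=\psi+y_j(1-\psi)$ with a Urysohn function $\psi$ peaking on a neighborhood of a zero of $g$; both work because near $Z(g)$ the coefficient of $g$ can be changed arbitrarily at negligible cost. (Minor bookkeeping: you need $\eta<\rho/(1+\e)$, not just $\eta<\rho$.)

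However, there is a genuine gap: the case $Z(g)=\emp$ is not proved, only described as ``the delicate heart of the proof,'' and the route you sketch for it (value-avoidance on the partial common-zero sets $E_j$, backed by dimension theory) is not how it goes and would be hard to push through. The paper's resolution is elementary and requires no covering-dimension input. Divide by the unit $g$, set $\bs F:=g^{-1}\bs f$, and choose $\bs u\in U_n(A)$ with $\|\bs F-\bs u\|<1/2$. After reordering, let $m$ be the \emph{minimal} length of an invertible subtuple, say $\widetilde{\bs u}:=(u_1,\dots,u_m)\in U_m(A)$. By minimality, for each $j\le m$ the $(m-1)$-tuple omitting $u_j$ has a common zero $x_j$, so the functions $v_j:=u_j/|\widetilde{\bs u}|$ satisfy $|v_j|\le 1$ everywhere and $|v_j(x_j)|=1$, i.e.\ $\|v_j\|_\infty=1$. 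The crucial geometric point, which answers your worry that a norm-one perturbation ``cannot remain in $U_n$ by smallness alone,'' is that this perturbation is not small but \emph{outward-pointing}: pointwise
$$|\widetilde{\bs u}+\bs v|=|\widetilde{\bs u}|\Bigl(1+\tfrac{1}{|\widetilde{\bs u}|}\Bigr)=1+|\widetilde{\bs u}|,$$
whence $|\widetilde{\bs F}+\bs v|\ge (1+|\widetilde{\bs u}|)-|\widetilde{\bs F}-\widetilde{\bs u}|\ge 1/2$, so $(F_1+v_1,\dots,F_m+v_m)\in U_m(A)$. Padding the remaining coordinates with the norm-one constant $\bs 1$ gives $(F_1+v_1,\dots,F_m+v_m,F_{m+1}+1,\dots,F_n+1)\in U_n(A)$, since invertibility is already guaranteed by the first $m$ entries. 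So the step you left open is exactly where the paper's main new idea lives, and it is resolved by a triangle-inequality trick on a minimal invertible subtuple, not by the dimension-theoretic general-position argument you anticipated.
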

\begin{proof}
By Vasershtein's result \cite{va}, we already have $\bsr C(X,\K)=\tsr C(X,\K)$. In view
of Proposition \ref{nsr-tsr}, it suffices to show that $\nor C(X,\K)\leq \tsr C(X,\K)$. 
Let $A= C(X,\K)$ and $n:=\tsr A$. 

{\bf Case 1} $n=1$. Let $(f,g)\in U_2(A)$.  First suppose that $Z(g)=\emp$. Since $\tsr A=1$,
there is $u\in A^{-1}$ such that $||g^{-1}f-u||_\infty\leq 1/2$. Now 
$$\mbox{$\dis g^{-1} f + \frac{u}{|u|}\not= 0$ on $X$},$$
because 
$$g^{-1} f + \frac{u}{|u|}= \big(g^{-1}f -u\big) + u\big(1+ \frac{1}{|u|}\big)=
\big(g^{-1}f -u\big) + \frac{u}{|u|}\;(1+|u|),$$
and the second summand has modulus strictly bigger than $1$. Hence
$$f+ \frac{u}{|u|}\, g\in U_1(A).$$

If $Z(g)\not=\emp$, we use Proposition \ref{smallnorm} to conclude that  there is $a\in A$
with $u:=f+ag\in U_1(A)$ and $||a||_\infty< 1/2$.   Approximating $a$ by an invertible function
we may assume that $a$ already is invertible. Since $f\not=0$ on $Z(g)$, say $|f|>\delta>0$ on 
$Z(g)$, we may choose two open sets $U$ and $V$ such that 
$$Z(g)\ss U\ss \ov U\ss V\ss \ov V\ss \{x\in X: |g|<\delta/2\}\inter \{x\in X: |f(x)| > \delta\}.$$
Let $x_0\in U$. We will construct a function $\phi\in A$ such that
$$\mbox{$|(a \phi)(x_0)|=1$ and $||a\phi||_\infty\leq 1$}$$
and  $f+(a\phi)g\not =0$ on $X$. To this end, let $\psi\in C(X,[0,1])$ satisfy
$$\mbox{$\psi\equiv 0$ on $X\setminus V$ and $\psi=1$ on $U$}$$
and let  $\phi$ be defined by
$$\phi= \frac{1}{a}\psi+ (1-\psi).$$
Then $\phi$ does the job. In fact,

$\bullet$~~ $(a\phi)(x_0)= \psi (x_0)+ a(x_0) \cdot 0=1$;

$\bullet$~~ $|a\phi|\leq  \psi+ |a|(1-\psi) \leq \psi+(1-\psi)=1$;

$ \bullet$~~ $|f+(a\phi) g|  = |f+a g| =|u| >0$ on $X\setminus V$ and

$ \bullet$~~ $|f+(a\phi) g|   \geq |f|-|a\phi|\, |g|\geq  \delta- 1\cdot |g|  \geq \delta/2>0$ on $V$.

We conclude that $\nor A=1$. So the case $n=1$ is settled completely.
\\

{\bf Case 2} $\tsr A=n<\infty$.

 For $\bs f=(f_1,\dots, f_n)\in A^n$, set  $||\bs f||=\sqrt{\sum_{j=1}^n||f_j||_\infty^2}$ and 
 $|\bs f|:=\sqrt{\sum_{j=1}^n |f_j|^2}$. Note that $|\bs f|\leq ||\bs f||$.

Let $(\bs f,g)\in U_{n+1}(A)$. 
We first assume that $Z(g)\not=\emp$.
By Theorem \ref{smallnorm}, there is $\bs y=(y_1,\dots,y_n)\in A^n$
with $\bs u:=\bs f+\bs y g\in U_n(A)$ and $|y_j| \leq 1/2$.
Since $\bs f\not=\bs 0$ on $Z(g)$, say $|\bs f|>\delta>0$ on 
$Z(g)$, we may choose two open sets $U$ and $V$ such that 
$$Z(g)\ss U\ss \ov U\ss V\ss \ov V\ss \{x\in X: |g|<\delta/(2\sqrt n)\}
\inter \{x\in X: |\bs f(x)| > \delta\}.$$
Fix $x_0\in Z(g)$. As above, let $\psi\in C(X,[0,1])$ satisfy
$$\mbox{$\psi\equiv 0$ on $X\setminus V$ and $\psi=1$ on $U$}.$$
For $j=1,\dots,n$,  let  $v_j$ be defined by
$$v_j= \psi+y_j (1-\psi),$$
and put $\bs v=(v_1,\dots,v_n)$. We claim that 
$$\mbox{$\bs f+ \bs v g\in U_n(A)$ and  $||v_j||_\infty=1$}.$$
In fact, 

$\bullet$~~ $ |v_j|\leq \psi +(1/2) (1-\psi)\leq 1$;

$\bullet$~~ $|v_j(x_0)|= \psi(x_0)=1$; hence $||v_j||_\infty=1$;

$\bullet$~~ $|\bs f+\bs v g| = |\bs f+ \bs y \, g|=|\bs u|>0$ on $X\setminus V$;

$\bullet$~~ $|\bs f+\bs v g| \geq |\bs f|- |g|\, |\bs v|\geq \delta  - \sqrt n  \delta/(2\sqrt n)=\delta/2$ on $V$.\\

Suppose now that $Z(g)=\emp$ and let $(\bs f,g)\in U_{n+1}(A)$, $n\geq 2$ (the case $n=1$ was
done in the preceding paragraph). Then
$(g^{-1}\,\bs f, 1)\in U_{n+1}(A)$ and it suffices to prove the existence
of $\bs v=(v_1,\dots,v_n)\in A^n$ such that $||v_j||_\infty=1$ and
$$g^{-1}\,\bs f+ \bs v\in U_n(A).$$
Let $\bs F:=g^{-1}\,\bs f$ and denote the coordinates of $F$ by $F_j$.
 Since $\tsr A=n$, there is $\bs u=(u_1,\dots,u_n)\in U_n(A)$ such that
$$||\bs F-\bs u||< 1/2.$$

We shall proceed inductively, with respect to the length of invertible subtuples {of $\bs u$,}
and will frequently use the following type of estimates.
Let  $\widetilde{\bs u}:=(u_1,\dots, u_m)\in U_m(A)$ and 
$$\bs v=(v_1,\dots, v_m):=\left(\frac{u_1}{|\widetilde{\bs u}|},\dots, 
\frac{u_m}{|\widetilde{\bs u}|}\right).$$
The hypothesis $\widetilde{\bs u} \in U_m(A)$ (or equivalently
$|\widetilde{\bs u}|\geq \delta>0$ on $X$) implies that $\bs v\in A^m$
 and each coordinate  of $\bs v$ has norm less than 1 (may be strict).  Moreover,
 if ${\widetilde{\bs F}=(F_1,\dots, F_m)}$,  then
\begin{equation}\label{m-tuple}
\bs {\widetilde F}+ \bs v\in U_m(A),
\end{equation}
 because
\begin{eqnarray*}
|\bs {\widetilde F}+\bs v| &=&|(\bs {\widetilde F}-\widetilde{\bs u})+
 (\widetilde{\bs u}+\bs v)|\\
 &\geq& |\widetilde{\bs u}+\bs v|-|\bs {\widetilde F}-\widetilde{\bs u}|\\
 &=&(1+|\widetilde{\bs u}|) -|\bs {\widetilde F}-\widetilde{\bs u}|\\
 &\geq& 1 -||\bs F-\bs u||\geq 1/2.
\end{eqnarray*}

$\bullet$~~  If $u_1\in A^{-1}=U_1(A)$ then, by the paragraph above for $m=1$, we see that 
$F_1+ u_1/|u_1|\in A^{-1}$. (Note that $|F_1-u_1|<1/2$). Hence
$$\left(F_1+ \frac{u_1}{|u_1|}, F_2 + 1,\dots, F_n+1\right)\in U_{n}(A).$$

$\bullet$~~  If $\bs u_{1,2}:=(u_1,u_2)\in U_2(A)$, but neither $u_1$ nor $u_2$ is in $U_1(A)$, then
there are $x_j\in X$ such that $u_j(x_j)=0$, $(j=1,2)$. Hence,  the coordinates of  
$$\bs v_{1,2}:=\left(\frac {u_1}{\sqrt{|u_1|^2+|u_2|^2}}, \frac {u_2}{\sqrt{|u_1|^2+|u_2|^2}}\right),
$$
have norm 1. Moreover, by \zit{m-tuple}.
$$\bs H_{1,2}:=\bs F_{1,2}+\bs v_{1,2}:=(F_1+v_1,F_2 +v_2)\in U_2(A),$$
and so
$$(F_1+v_1, F_2+v_2, F_3+1,\dots, F_n+1)\in U_n(A).$$

$\bullet$~~ If $\bs  u_{1,2,3}:=(u_1,u_2,u_3)\in U_3(A)$,  but neither $(u_1,u_2), (u_1,u_3)$
nor $(u_2,u_3)$ in $U_2(A)$, then  there are $x_{1,2}, x_{1,3}, x_{2,3}\in X$ such that 
$u_i(x_{1,2})=0$, $(i=1,2)$,   $u_i(x_{1,3})=0$, $(i=1,3)$, and $u_i(x_{2,3})=0$, $(i=2,3)$.
Hence,  the coordinates of  
$$\bs v_{1,2,3}:=\left(\frac {u_1}{\sqrt{|u_1|^2+|u_2|^2+|u_3|^2}}, 
\frac {u_2}{\sqrt{|u_1|^2+|u_2|^2+|u_3|^2}},
\frac {u_3}{\sqrt{|u_1|^2+|u_2|^2+|u_3|^2}}\right),
$$
have norm 1. Moreover, by \zit{m-tuple}.
$$\bs H_{1,2,3}:=\bs F_{1,2,3}+\bs v_{1,2,3}:=(F_1+v_1,F_2 +v_2, F_3+v_3)\in U_3(A),$$
and so
$$(F_1+v_1,\dots, F_3+v_3, F_4+1,\dots, F_n+1)\in U_n(A).$$

Now we proceed  inductively up to the $n$-th step. Since $\bs u\in U_n(A)$,
we may assume (by the induction hypothesis), that no subtuple of order $n-1$ is
invertible. Then we may choose $x_j\in \Inter_{k\not=j} Z(u_k)\not=\emp$, $j=1,\dots, n$.
Consequently,  the coordinates of 
$$\bs v:= \frac{\bs u}{|\bs u|}=\left(\frac{u_1}{|\bs u|}, \dots, \frac{u_n}{|\bs u|}\right)$$ 
each have norm one.  Since $\bs F+\bs v\in U_n(A)$ (by \zit{m-tuple}), we are done.

{\bf Case 3} $\tsr A=\infty$. By Theorem \ref{nsr-tsr}, $\nor A$ cannot be finite in that case.
Hence we deduce from all the three cases above that 
$\nor A\leq \tsr A\leq \nor A$, and so  we have equality of all the three stable ranks for $C(X,\K)$.
\end{proof}

A combination of the previous results now yields:

\begin{corollary}
Let $X$ be  a compact Hausdorff space and $A=C(X,\K)$. Then 
$$\bsr A=\usr A=\aur A=\tsr A= \snr A=\nor A.$$
\end{corollary}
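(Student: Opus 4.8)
The plan is to recognize that this corollary is a pure synthesis of the inequalities and equalities already established, requiring no new construction whatsoever. First I would note that $A=C(X,\K)$ is a commutative unital $C^*$-algebra and hence, in particular, a normed $cQ$-algebra: its group of units is open and inversion is continuous on it. Consequently every proposition of the preceding sections applies to $A$ equipped with its supremum norm, and I am free to chain them.

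The crux is the single chain of inequalities recorded in Proposition \ref{snsr-nosr},
$$\bsr A \leq \usr A \leq \aur A \leq \snr A \leq \nor A,$$
together with the identity $\bsr A=\nor A$ furnished by Theorem \ref{nsr-cx}. Since the two extreme members of this chain coincide, each intermediate inequality must in fact be an equality, so that
$$\bsr A = \usr A = \aur A = \snr A = \nor A.$$

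It then remains only to slot $\tsr A$ into this common value, which is immediate: Theorem \ref{nsr-cx} already records $\tsr A=\bsr A=\nor A$ (and, independently, Proposition \ref{smallnorm} gives $\snr A=\tsr A$), so $\tsr A$ equals the shared value of all the other ranks. Assembling these observations yields the asserted six-fold equality.

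I expect no genuine obstacle here, since all the substantive work has been carried out upstream: Vasershtein's theorem supplies $\bsr A=\tsr A$, Badea's squeeze gives $\tsr A\leq\snr A\leq\nor A$, and the delicate norm-one constructions of Theorem \ref{nsr-cx} deliver the reverse estimate $\nor A\leq\tsr A$. The only point warranting a moment's care is confirming that $A$ satisfies the hypothesis ``normed $cQ$-algebra'' of each proposition invoked, and that the coordinatewise norm conditions defining $\usr$, $\aur$, $\snr$, and $\nor$ are intrinsic to $A$, hence unaffected by the particular product norm $\|\bs a\|=\sum_{j}\|a_j\|$ appearing in Theorem \ref{badi}. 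Both checks are routine.
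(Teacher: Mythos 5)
Your proposal is correct and is exactly the paper's argument: the paper offers no separate proof beyond the phrase ``a combination of the previous results,'' and that combination is precisely your sandwich — Theorem \ref{nsr-cx} gives $\bsr A=\tsr A=\nor A$, and Proposition \ref{snsr-nosr} squeezes $\usr A$, $\aur A$, and $\snr A$ between the equal extremes $\bsr A$ and $\nor A$. Your added checks (that $C(X,\K)$ is a normed $cQ$-algebra and that the ranks are norm-intrinsic) are sensible but routine, as you say.
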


Recall that in the context of the algebras $C(X,\K)$, the original question by Rieffel
reads as follows:

$\bullet$~  Given $(\bs f,g)\in U_{n+1}(C(X,\K))$, when does there exist $\bs u=(u_1,\dots,u_n)\in C(X,\K^n)$
with $\bs f+\bs u\, g\in U_n(C(X,\K))$ such that all the components $u_j$ of $\bs u$
 have modulus one? It remains unanswered.

\section{General uniform algebras}
Given a commutative unital normed algebra $A$, 
let us call an $(n+1)$-tuple $(\bs f, g)\in U_{n+1}(A)$ {\it norm-one reducible},
if there exists $\bs c=(c_1\dots,c_n)\in A^n$ such that $||c_j||=1$ and 
$\bs f+\bs c\, g\in U_n(A)$.  In the previous section  we have shown that
in $C(X,\K)$ every invertible $(n+1)$-tuple is norm-one reducible, provided $\tsr C(X,\K)=n$.
Using those ideas, we give a sufficient condition on tuples to be norm-one reducible
in an arbitrary uniform algebra. The proof is based on the theory of (weak) peak-points and
 the following function theoretic Lemma  from \cite[p. 491]{gm1}. Recall that a point $x\in X$
 is a {\it weak peak point} for a uniformly closed subalgebra $A$ of $C(X,\C)$   if
 $\{x\}$ is an intersection of peak-sets (these are closed subsets $E$ of $X$ for which there
 exists $f\in A$ such that $f(\xi)=1$  if $\xi\in E$ and $|f(\xi)|<1$ if $\xi\in X\setminus E$).

\begin{lemma}\label{mob} {\sl Let $0<\eta<1$ and $0<\e<1$. Then there exists an
automorphism $L$ of the unit disk with fixed points $-1$ and $1$,
 and a positive zero $a$ such that the image of
$\{z\in\ov {\D}: |z-1|>\eta\}$ under  $L$ is contained in $\{w\in\ov {\D}: |w+1|<\e\}$.}
\end{lemma}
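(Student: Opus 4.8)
The plan is to push the whole picture to the right half-plane, where the automorphisms fixing $\pm1$ are simply the real dilations. Put $\varphi(z)=\frac{1-z}{1+z}$; this is a Cayley-type \emph{involution} carrying $\ov\D$ onto the closed right half-plane $\ov\Pi=\{\mathrm{Re}\,w\ge0\}\cup\{\infty\}$, with $\varphi(1)=0$ and $\varphi(-1)=\infty$. The disk automorphisms fixing both $1$ and $-1$ correspond under $\varphi$ exactly to the half-plane maps $w\mapsto\lambda w$, $\lambda>0$, so for a parameter $\lambda>1$ to be fixed later I would set $L:=\varphi\circ(\lambda\,\cdot\,)\circ\varphi$, that is $L(z)=\frac{(1-\lambda)+(1+\lambda)z}{(1+\lambda)+(1-\lambda)z}$. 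A one-line check gives $L(1)=1$, $L(-1)=-1$, and $L(a)=0$ with $a=\frac{\lambda-1}{\lambda+1}$, which is a \emph{positive} zero precisely because $\lambda>1$. As $L$ is a disk automorphism we automatically have $L(\ov\D)=\ov\D$, so all that remains is to force the inclusion $L(S)\subseteq\{|w+1|<\e\}$, where $S:=\{z\in\ov\D:|z-1|>\eta\}$.

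Next I would rewrite the two relevant quantities in the $w$-variable. With $w=\varphi(z)$ one finds $z-1=\frac{-2w}{1+w}$, hence $|z-1|=\frac{2|w|}{|1+w|}$; and with $W:=\lambda w=\varphi(L(z))$ one finds $L(z)+1=\frac{2}{1+W}$, hence $|L(z)+1|=\frac{2}{|1+W|}$. Since $\mathrm{Re}\,W\ge0$ we get the clean bound $|1+W|^2=1+2\,\mathrm{Re}\,W+|W|^2\ge|W|^2$, so $|1+W|\ge|W|=\lambda|w|$ and therefore $|L(z)+1|\le\frac{2}{\lambda|w|}$. Thus it is enough to bound $|w|$ from below uniformly over $w\in\varphi(S)$ and then take $\lambda$ large.

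The crux is that uniform lower bound. From $\frac{2|w|}{|1+w|}>\eta$ and the reverse triangle inequality $|1+w|\ge1-|w|$ one gets $2|w|>\eta(1-|w|)$, i.e. $|w|>\frac{\eta}{2+\eta}=:c>0$ (the case $|w|\ge1$ being trivial, and $w=\infty$, i.e. $z=-1$, being harmless since $L(-1)=-1$). Geometrically this just says that $\varphi$ sends the Apollonius circle $|z-1|=\eta$ to the circle $|w-\frac{\eta^2}{4-\eta^2}|=\frac{2\eta}{4-\eta^2}$ and that $\varphi(S)$ is the part of $\ov\Pi$ lying outside it, hence bounded away from $w=0$. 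With $c$ in hand I would finish by choosing any $\lambda>\max\{1,\,2/(\e c)\}$: then for every $z\in S$, $|L(z)+1|\le\frac{2}{\lambda|w|}<\frac{2}{\lambda c}<\e$, as required. I expect the only real work to be this elementary geometric bookkeeping — identifying $\varphi(S)$ and extracting the constant $c(\eta)$; the rest is the one-parameter conjugation together with the triangle-inequality estimate above.
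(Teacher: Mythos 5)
Your proof is correct. One thing to be aware of: the paper itself contains no proof of this lemma --- it is quoted verbatim from the reference \cite[p.~491]{gm1}, so there is no internal argument to compare against, and your write-up supplies a self-contained proof where the paper only gives a citation. Your route is sound at every step: the involution $\varphi(z)=(1-z)/(1+z)$ does carry $\ov{\D}$ onto the closed right half-plane with $\varphi(1)=0$, $\varphi(-1)=\infty$, the conjugated dilation $L=\varphi\circ(\lambda\,\cdot)\circ\varphi$ is an automorphism of $\D$ fixing $\pm 1$ with zero $a=(\lambda-1)/(\lambda+1)>0$ for $\lambda>1$, and the three estimates you use --- $|z-1|=2|w|/|1+w|$, $|L(z)+1|=2/|1+\lambda w|\leq 2/(\lambda|w|)$ (valid since $\mathrm{Re}\,w\geq 0$), and the uniform lower bound $|w|>\eta/(2+\eta)$ on the image of $\{z\in\ov{\D}:|z-1|>\eta\}$ --- are all verified correctly, including the exceptional point $z=-1$ (where $w=\infty$ but $L(-1)=-1$ lands inside $\{|w+1|<\e\}$). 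Two cosmetic remarks: the case split $|w|\geq 1$ versus $|w|<1$ is unnecessary, since $|1+w|\geq 1-|w|$ and the algebra $2|w|>\eta(1-|w|)\Rightarrow|w|>\eta/(2+\eta)$ are valid regardless of the sign of $1-|w|$; and your $L$ is exactly the classical automorphism $(z-a)/(1-az)$ with $a\in(0,1)$, which is the family one would expect the cited source to use --- your half-plane parametrization simply makes the limiting behaviour as $a\to 1^-$ (equivalently $\lambda\to\infty$) transparent and the constants explicit.
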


\begin{proposition}\label{nsr-uni}
Let $A$ be a uniform algebra. We view $A$ as a uniformly closed subalgebra
of $C(X,\C)$, where $X=M(A)$. Suppose that $n:=\snr A<\infty$ and  
let $(\bs f, g)\in U_{n+1}(A)$. 
 Then $(\bs f,g)$  is norm-one reducible if $Z(g)$ meets the Shilov boundary.
\end{proposition}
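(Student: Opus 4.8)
The plan is to transport the construction from Case~2 of Theorem~\ref{nsr-cx} to the uniform-algebra setting, the only genuinely new ingredient being that the real cut-off $\psi\in C(X,[0,1])$ used there is unavailable and must be replaced by a complex peaking function extracted from Lemma~\ref{mob}. So first I would isolate the two facts that make the $C(X,\K)$ argument run and check that they persist here. Since $X=M(A)$, invertibility of $(\bs f,g)$ means that $f_1,\dots,f_n,g$ have no common zero on $X$; hence $\bs f\ne\bs 0$ on the compact set $Z(g)$, and I may fix $\delta>0$ with $|\bs f|>\delta$ on $Z(g)$, while $\{|g|<\rho\}$ is a neighbourhood of $Z(g)$ for every $\rho>0$. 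Because $n=\snr A<\infty$, for a parameter $\e>0$ to be fixed at the end there is $\bs y=(y_1,\dots,y_n)\in A^n$ with $\|y_j\|<\e$ and $\bs u:=\bs f+\bs y\,g\in U_n(A)$; write $\delta_u:=\min_X|\bs u|>0$. The hypothesis is used as follows: the Shilov boundary $\partial A$ is the closure of the set of weak peak points, so a zero $x_0\in Z(g)\inter\partial A$ is a limit of weak peak points, and I may pick a weak peak point $x_1$ lying in the open neighbourhood $\{|\bs f|>\delta\}\inter\{|g|<\delta/(2\sqrt n)\}$ of $x_0$. The point to stress is that $x_1$ need not belong to $Z(g)$; it only has to sit in the region where $\bs f$ is large and $g$ is small.

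Next I would build the peaking function. Fix an open $V$ with $x_1\in V$ and $\ov V\ss\{|\bs f|>\delta\}\inter\{|g|<\delta/(2\sqrt n)\}$. As $x_1$ is a weak peak point, $\{x_1\}$ is an intersection of peak sets, and by compactness of $X$ finitely many of them intersect in a peak set $E$ with $x_1\in E\ss V$; let $f_0\in A$ peak on $E$, so $\|f_0\|=1$, $f_0\equiv1$ on $E$ and $|f_0|<1$ off $E$. Choose $\eta$ so small that $\{|f_0-1|\le\eta\}\ss V$, whence $X\setminus V\ss\{|f_0-1|>\eta\}$, and apply Lemma~\ref{mob} to this $\eta$ and to $\e$. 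The resulting automorphism $L$ fixes $1$ and maps $\{|z-1|>\eta\}$ into $\{|w+1|<\e\}$, so the composition $L\circ f_0\in A$ satisfies $(L\circ f_0)(x_1)=1$, $|L\circ f_0|\le1$, and $|L\circ f_0+1|<\e$ off $V$. Setting $\psi:=\tfrac12(1+L\circ f_0)\in A$ I obtain exactly the substitute I want: $\psi(x_1)=1$, $|\psi|\le1$ on $X$, and $|\psi|<\e$ on $X\setminus V$. In complete analogy with Theorem~\ref{nsr-cx} I then put $v_j:=\psi+y_j(1-\psi)$ and $\bs v:=(v_1,\dots,v_n)$.

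Now I would verify the two required properties and repair the norm. For invertibility I split $X$ as before: on $\ov V$ one has $|\bs f|>\delta$, $|g|<\delta/(2\sqrt n)$ and $|v_j|\le|\psi|+|y_j|\,|1-\psi|\le1+\e$, so $|\bs v|\le\sqrt n\,(1+\e)$ and $|\bs f+\bs v\,g|\ge|\bs f|-|g|\,|\bs v|>0$ for $\e<1$; off $V$ one has $v_j-y_j=\psi(1-y_j)$ with $|\psi|<\e$, so $\bs f+\bs v\,g$ differs from $\bs u$ by at most $\|g\|\sqrt n\,\e(1+\e)$ in modulus and stays above $\delta_u-\|g\|\sqrt n\,\e(1+\e)>0$. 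Thus $\bs f+\bs v\,g\in U_n(A)$ with a uniform margin. As for the norm, $|v_j(x_1)|=1$ forces $\|v_j\|\ge1$, but because $\psi$ is complex-valued the clean bound $|v_j|\le1$ of the real cut-off case is lost and I only get $\|v_j\|\le1+\e$. I would cure this by normalising: set $c_j:=v_j/\|v_j\|$, so that $\|c_j\|=1$; since $\|c_j-v_j\|=\|v_j\|-1\le\e$, the tuple $\bs f+\bs c\,g$ differs from $\bs f+\bs v\,g$ by at most $\|g\|\sqrt n\,\e$, hence still lies in $U_n(A)$ once $\e$ has been fixed small enough relative to $\delta$, $\delta_u$ and $\|g\|$. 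Then $\bs c=(c_1,\dots,c_n)$ witnesses the norm-one reducibility of $(\bs f,g)$.

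The step I expect to be the real obstacle is the peaking construction of the third paragraph: converting the merely qualitative datum ``$x_1$ is a weak peak point'' into a function $\psi\in A$ that is genuinely small, and not just of modulus $<1$, away from the prescribed neighbourhood $V$. This is precisely the task for which the combination of finite intersections of peak sets with the M\"obius normalisation of Lemma~\ref{mob} is tailored. A secondary subtlety, with no counterpart in the $C(X,\K)$ proof, is that the peaking function is unavoidably complex-valued, which destroys the convex-combination estimate $|v_j|\le1$ and forces the exact norm-one condition to be recovered through the smallness of $\bs y$ together with the concluding normalisation.
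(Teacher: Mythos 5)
Your proposal follows the paper's own proof quite closely up to the definition of $\bs v$: the same reduction via $\snr A$, the same passage to a weak peak point $x_1$ near a point of $Z(g)\inter\partial A$, a peak set inside $V$, and Lemma~\ref{mob} to manufacture $\psi\in A$ with $\psi(x_1)=1$, $|\psi|\le 1$ and $|\psi|$ small off $V$ (the paper first raises $(1+q)/2$ to a high power before invoking the lemma, but your direct shrinking of $\eta$ achieves the same). The genuine gap is your final normalisation $c_j:=v_j/\|v_j\|$, where the choice of parameters is circular. The error it introduces is bounded by $\sqrt n\,\|g\|\,(\max_j\|v_j\|-1)$, and $\max_j\|v_j\|-1$ is controlled only by $\e$, the bound on $\|y_j\|$; for $\bs f+\bs c\,g$ to stay invertible off $V$ this error must be smaller than (essentially) $\delta_u=\min_X|\bs f+\bs y\,g|$. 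But $\bs y$ --- and hence $\delta_u$ --- is handed to you by the definition of $\snr A$ only \emph{after} $\e$ is fixed, so ``fix $\e$ small enough relative to $\delta_u$'' is not a legal move, and restarting with a smaller $\e$ produces a new $\bs y$ with a new, possibly far smaller, $\delta_u$: the definition of $\snr$ gives no lower bound for $\delta_u$ in terms of $\e$. Worse, with your stated estimates the required inequality can never be verified in the main case: if $\bs f$ has a common zero $x^*$ (necessarily in $X\setminus V$, since $|\bs f|>\delta$ on $\ov V$), then $\delta_u\le|\bs y(x^*)g(x^*)|\le\sqrt n\,\e\,\|g\|$, which is exactly your upper bound $\sqrt n\,\|g\|\,\e$ for the normalisation error, so your margin estimate can never dominate your perturbation estimate. (The other smallness you need, that of $|\psi|$ off $V$, is harmless: Lemma~\ref{mob} lets you choose it after $\bs u$ is known, which is precisely the order of choices in the paper, where $\|y_j\|<1/2$ is a fixed constant.)

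The paper removes the need for normalisation by one algebraic change: it sets $v_j=\psi^2+y_j(1-\psi)^2$ instead of $\psi+y_j(1-\psi)$. Writing $p=L\circ\Phi$, $|p|\le1$, one has
$$\left|\frac{1+p}{2}\right|^2+\left|\frac{1-p}{2}\right|^2=\frac{1+|p|^2}{2}\le 1,$$
hence $|v_j|\le|\psi|^2+|y_j|\,|1-\psi|^2\le1$ pointwise (only $\|y_j\|\le1$ is used), while $v_j(x_1)=1$; thus $\|v_j\|=1$ exactly, no normalisation is needed, and the only quantity that must be small relative to $\min_X|\bs u|$ is $|\psi|$ off $V$, which is legitimately chosen last. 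If you replace your $\bs v$ by this squared version and delete the normalisation step, the rest of your argument closes and coincides with the paper's proof.
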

\begin{proof}
Recall that by  Proposition \ref{snsr-nosr} that ${\rm snsr}\, A\leq \nor A$. 
If $f_j\equiv 0$ on $X$ for every $j$, then $(0+1\cdot g, \dots, 0+1\cdot g)\in U_n(A)$
is a solution to our norm-controlled reducibility. So we may assume that not all the $f_j$
are the zero functions. If $g\equiv 0$, then $\bs f\in U_n(A)$ and we take
$\bs f+ \bs e\cdot g$ as a solution, where $\bs e=(\bs 1,\dots,\bs 1)$.

 Let $E=\partial A$ be the Shilov boundary of $A$.
By our assumption,  $Z(g)\inter E\not=\emp$. Since $\snr A=n$,
 there is $\bs y=(y_1,\dots,y_n)\in A^n$
with $\bs u:=\bs f+\bs y \,g\in U_n(A)$ and $||y_j||< 1/2$.  
Let $x_0\in Z(g)\inter E$. 
Since $\bs f\not=\bs 0$ on $Z(g)$, say $|\bs f|>\delta>0$ on 
$Z(g)$, we may choose two open sets $U$ and $V$ such that 
$$x_0\ss U\ss \ov U\ss V\ss \ov V\ss \{x\in X: |g|<\delta/(2\sqrt n)\}
\inter \{x\in X: |\bs f(x)| > \delta\}.$$
 Because $E$ is the closure of the set of weak-peak points \cite{gam}, $U\inter E$ contains such a point $x_1$. Hence, there is a peak-set $S$ such that $x_1\in S\ss U$. 
Choose a peak function $q\in A$ associated with $S$.   Let  $m\in \N$, $m\geq 2$, be
 so big that on $X\setminus V$ the function $\Phi:=[(1+q)/2]^m$ satisfies
 $$|\Phi|\leq 1/2.$$
 
  Let $\eta>0$ be such that 
 $$\{z\in \D: |z|\leq 1/2\}\ss \{z\in\ov {\D}: |z-1|>\eta\},$$
 and put
 $$\e:= \frac{\delta'}{4\sqrt n ||g||_\infty},$$
 where $\delta':=\min_X |\bs u|$. Consider the M\"obius transform
of Lemma \ref{mob}with $L(1)=1$, $L(-1)=-1$,
$$
L(\{z\in\ov {\D}: |z-1|>\eta\})\ss \{w\in\ov {\D}: |w+1|<\e\}.
$$
Then $\psi:=(1+L\circ\Phi)/2$  again is a peak function in $A$ associated with $S$
(note that the membership in $A$ is given by the functional calculus: $\sigma (\Phi)\ss\ov \D$
and $L$ holomorphic in  a neighborhood of $\ov{\D}$). Due to the choice of our parameters,
 $\psi\sim 0$ on $X\setminus V$; more precisely, 
$$|\psi|\leq \frac{\delta'}{8 \sqrt n ||g||_\infty}.$$ 
For $j=1,\dots,n$,  let  $v_j$ be defined by
$$v_j= \psi^2 +y_j(1-\psi)^2=
\left(\frac{1+L\circ \Phi}{2}\right)^2+y_j\, \left(\frac{1-L\circ\Phi}{2}\right)^2,$$
and put $\bs v=(v_1,\dots,v_n)$. Then $\bs v\in A^n$. We claim that 
$$\mbox{$\bs f+ \bs v g\in U_n(A)$ and  $||v_j||_\infty=1$}.$$

In fact, since $x_1\in S\inter E$,  $|v_j(x_1)|=1.$ Moreover if $p:=L\circ\Phi$,
\begin{eqnarray*}
|v_j|&\leq &\left|\frac{1+p}{2}\right|^2+|y_j|\, \left|\frac{1-p}{2}\right|^2\\
        &\leq & \left|\frac{1+p}{2}\right|^2+1 \cdot \, \left|\frac{1-p}{2}\right|^2\\
        &\leq & \frac{1}{4} \Big((1+|p|^2 +2 {\rm Re}\, p) + (1+|p|^2 -2 {\rm Re}\, p)\Bigr)\\
        &\leq & \frac{1}{4} \cdot 4=1
        \end{eqnarray*}

\noindent Moreover

$\bullet$~~ $|\bs f+\bs v g| \geq |\bs f|-|\bs v|\, |g|\geq \delta  - \sqrt n \; \delta/(2\sqrt n)=\delta/2$
 on $V$ and

$\bullet$~~ $|\bs f+\bs v g| \geq |\bs f +\bs y g|-|\bs v-\bs y|\, |g| =  |\bs u|-|\bs v-\bs y|\, |g| $ on $X\setminus V$.\\

\noindent But $v_j-y_j=\psi^2 +y_j(1+\psi^2-2\psi)-y_j=\psi^2+y_j\psi^2-2\psi y_j=\psi(\psi+y_j\psi-2y_j)$.
Hence, on $X\setminus V$, 
$$|v_j-y_j|\leq 4|\psi| \leq 4\frac{\delta'}{8\,||g||_\infty\sqrt n}.$$
Consequently, on $X\setminus V$, 
$$|\bs f+\bs v\,g|\geq \delta'-||g||_\infty \; \frac{\delta'}{2 ||g||_\infty \sqrt n} \sqrt n =\delta'/2>0.$$

\end{proof}

 \end{document}